\documentclass[a4paper,10pt]{amsart}

\usepackage{amsrefs}
\usepackage{amsfonts,mathrsfs}
\usepackage{amsthm}
\usepackage{amssymb}
\usepackage{enumerate}
\usepackage{tikz-cd}
\usepackage{cleveref}
\usepackage{enumitem}
\usepackage[obeyFinal,textsize=footnotesize]{todonotes} 
\usepackage{soul}

\theoremstyle{definition}
\newtheorem{Def}{Definition}[section]

\newtheorem{rem}[Def]{Remark}
\theoremstyle{plain}

\newtheorem{thm}[Def]{Theorem}
\newtheorem*{thm*}{Theorem}
\newtheorem{lem}[Def]{Lemma}

\newtheorem{cor}[Def]{Corollary}
\newtheorem*{cor*}{Corollary}
\newtheorem{con}[Def]{Conjecture}

\newtheorem*{con*}{Conjecture}

\newtheorem*{verm*}{Vermutung}

\newcommand{\quash}[1]{}

\usepackage{mathrsfs}

\newcommand{\BR}{\operatorname{BR}}
\newcommand{\upL}{\operatorname{L}}

\newcommand{\trop}{\operatorname{trop}}

\newcommand{\Span}{\operatorname{span}}

\newcommand{\Lo}{\mathbb{L}}

\newcommand{\cA}{{\mathcal A}}

\newcommand{\R}{{\mathbb R}}

\newcommand{\Q}{{\mathbb Q}}

\newcommand{\Z}{{\mathbb Z}}

\providecommand{\R}{\mathbb R}
\providecommand{\Cut}{\operatorname{Cut}}

\title{A note on bounded ratios}

\author{Lorenzo~Baldi}
\address{Goethe-Universität, Frankfurt am Main, Germany}
\email{baldi@math.uni-frankfurt.de}
\author{Mario Kummer}
\address{Technische Universit\"at, Dresden, Germany} 
\email{mario.kummer@tu-dresden.de}

\subjclass[2020]{05E14, 14T99, 14P10.}

\begin{document}
\begin{abstract}
We prove that the set of bounded ratios $\BR(X)$ on a semialgebraic set $X\subset\R^n_{>0}$ is the convex cone of linear forms that are nonnegative on the tropicalization $\trop(X)$. In particular, it is a rational polyhedral convex cone. For $X$ the set of Lorentzian polynomials with fixed M-convex support, it is the dual to the set of M-convex functions. We record an explicit counterexample to a conjecture of Huang--Huh--Soskin--Wang on the bounded ratios on Lorentzian polynomials. The bounded ratio in the counterexample corresponds to the non-hypermetric clique-web facet $\mathrm{CW}^1_7(1,1,1,1,1,-1,-1)$ of the cut cone on seven vertices. 
\end{abstract}
\maketitle
\section{Introduction}
The \emph{set of bounded ratios} on a semialgebraic subset $X \subset \R^n_{>0}$ of the positive orthant was defined in \cite{HHSW25} as
\begin{equation*}
    \BR(X)=\{ \, \lambda \in \R^n \mid \exists c \in \R_{>0} \text{ s.t. } x^\lambda \le c \text{ on } X \,\}.
\end{equation*}
{The set of bounded ratios is a closed convex cone in $\R^n$ \cite{HHSW25}*{page 2}. Given a set $A \subset \R^n$ (not necessarily convex),  we denote its \emph{dual convex cone} as 
\[
    A^\vee = \{ \, \lambda \in \R^n \mid \langle\lambda,x\rangle\geq0 \text{ for all } x\in A \, \}
\]
where$\langle\cdot,\cdot\rangle$ is the standard inner product on $\R^n$. We characterize $\BR(X)$ as the dual convex cone to $\trop(X)$, the \emph{tropicalization} of $X$ (see Section~\ref{sec:thm} for the definition).
\begin{thm}\label{thm:main}
    Let $X \subset \R^n_{>0}$ be a semialgebraic subset of the of the positive orthant. Then $\BR(X) = \trop(X)^\vee$. In particular, the set of bounded rations $\BR(X)$ is a rational polyhedral convex cone.
\end{thm}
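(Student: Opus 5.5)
Here $\trop(X)$ is taken to be the logarithmic limit set: the set of limits $\lim_{t\to 0} t\log(x_t)$ along sequences (equivalently, semialgebraic curves) in $X$, where $\log$ is applied coordinatewise. The standard facts I will use, and which I expect the paper to recall in Section~\ref{sec:thm}, are these. First, $\trop(X)$ is a closed rational polyhedral fan (Alessandrini; Bieri--Groves for the real case). Second, every $w\in\trop(X)$ is realised by a semialgebraic curve $\gamma:(0,\epsilon)\to X$ whose coordinates have Puiseux-type expansions $\gamma_i(s)\sim a_i s^{-w_i}$ with $a_i>0$. The proof then splits into the two inclusions, and polyhedrality follows because the dual of a rational polyhedral fan (a finite union of rational polyhedral cones) is the intersection of the duals of those cones, hence a rational polyhedral cone.

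\emph{$\BR(X)\subseteq\trop(X)^\vee$.} Suppose $x^\lambda\le c$ on $X$ and let $w\in\trop(X)$, say $w=\lim t_k\log x^{(k)}$ with $t_k\to0^+$ and $x^{(k)}\in X$. Then $\langle\lambda,\log x^{(k)}\rangle\le\log c$. Multiplying by $t_k$ and passing to the limit gives $\langle \lambda,w\rangle\le 0$.

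This yields the inequality in the opposite direction from the stated duality. The sign convention for $\trop$ (equivalently, whether $\log$ or $-\log$ appears) must therefore be the one that makes the statement read $\langle\lambda,w\rangle\ge0$. I will adopt the paper's convention, presumably $\trop(X)=\lim t\cdot(-\log X)$, so that this inclusion reads $\langle\lambda,w\rangle\ge0$ as required.

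\emph{$\trop(X)^\vee\subseteq\BR(X)$ (main step).} Take $\lambda$ nonnegative on $\trop(X)$ in the appropriate sign convention, and suppose $x^\lambda$ is unbounded on $X$. The function $f(x)=\langle\lambda,\log x\rangle$ is not semialgebraic, so the curve selection lemma cannot be applied to it directly. I handle this in two steps.
\begin{enumerate}
\item Reduce to rational $\lambda$. If $\lambda$ is in the interior of the dual cone, perturb it rationally inside the cone and use that $\BR(X)$ is a closed convex cone. For a general $\lambda$, first prove the equality for rational points, and then use that $\trop(X)^\vee$ is rational polyhedral, so it is the closed cone generated by its rational points.
\item For $\lambda\in\Z^n$, the function $x^\lambda$ is semialgebraic on $\R^n_{>0}$. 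If it is unbounded on $X$, curve selection on the semialgebraic set $\{(x,s):x\in X,\ x^\lambda s\ge 1\}$ as $s\to0$ gives a semialgebraic curve $\gamma(s)\in X$ with $\gamma(s)^\lambda\ge 1/s\to\infty$.
\end{enumerate}
The coordinates of $\gamma$ are Puiseux series, so $\gamma_i(s)=a_is^{-w_i}(1+o(1))$ with $a_i>0$ and $w\in\Q^n$. Then $\gamma(s)^\lambda=a^\lambda s^{-\langle\lambda,w\rangle}(1+o(1))$, and for this to tend to infinity we need $\langle\lambda,w\rangle>0$. But $w\in\trop(X)$, since $w=\lim_{s\to0}\log\gamma(s)/\log(1/s)$, so the hypothesis on $\lambda$ forbids this. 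This is a contradiction.

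The delicate points are the consistency of the sign convention and the rational-approximation step. The latter needs care: I approximate $\lambda$ by rational points $\lambda_k\in\trop(X)^\vee$, which is possible precisely because this cone is rational polyhedral by the Bieri--Groves/Alessandrini finiteness theorem, and then use closedness of $\BR(X)$. I expect this finiteness input to be the main obstacle.
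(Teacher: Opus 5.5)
Your proposal is correct and is essentially the paper's proof: the easy inclusion comes straight from the logarithmic definition of $\trop(X)$. The hard inclusion reduces to rational $\lambda$ via polyhedrality of $\trop(X)$, applies curve selection, and reads off the leading Puiseux exponents, which is exactly the valuation $v$ on $X_R$ used in the paper. Two small fixes: with the paper's $-\log_q$ convention the point of $\trop(X)$ produced by $\gamma$ is $v(\gamma)=-w$ (not $w$), so it is $\langle\lambda,-w\rangle\ge 0$ that contradicts $\langle\lambda,w\rangle>0$; and you do not need closedness of $\BR(X)$, since a rational polyhedral cone is generated by finitely many rational vectors and $\BR(X)$ is already a convex cone.
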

We prove Theorem~\ref{thm:main} in \Cref{sec:thm}.
} Although this was known in several special cases and under some additional hypotheses on $X$, e.g.~\cite{blekhermanMomentsSumsSquares2025}*{Remark~2.6} or \cite{HHSW25}*{page~2}, we did not find this statement in its full generality in the literature. {In the case that $X$ is the set of Lorentzian polynomials $\upL_J$ \cite{lorentzian} with fixed M-convex monomial support $J\subset\Z_{\geq0}^n$, we obtain the following generalization of \cite{HHSW25}*{Theorem B}.
\begin{cor}\label{cor:lorentzian}
    Let $\mathrm{M}(J)$ be the set of all M-convex functions $J\to\R$. Then $$\BR(\upL_J)=\mathrm{M}(J)^\vee.$$
\end{cor}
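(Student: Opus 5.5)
By \Cref{thm:main}, $\BR(\upL_J)=\trop(\upL_J)^\vee$. The corollary therefore reduces to identifying the tropicalization of the space of Lorentzian polynomials with support $J$. I will show that $\trop(\upL_J)=\mathrm{M}(J)$ up to the sign convention built into the definition of $\trop$, so that dualizing gives the claim. Concretely, I view $\upL_J$ as a subset of $\R^J_{>0}$ via coefficients $(c_\alpha)_{\alpha\in J}$. The tropicalization is then the set of limits of $\log_t$ of coordinates along $t\to\infty$, or equivalently the valuations of points in $\upL_J$ over a real closed valued field such as the Puiseux series $\R\{\{t\}\}$. Both descriptions agree for semialgebraic sets, and I assume this is the definition in Section~\ref{sec:thm}.

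The key input is the known tropical characterization of Lorentzian polynomials from \cite{lorentzian}. The first half: if $f=\sum_\alpha c_\alpha(t) x^\alpha$ is Lorentzian over a real closed valued field, then $\alpha\mapsto -\mathrm{val}(c_\alpha/\alpha!)$ is M-concave, i.e.\ its negative is M-convex. The second half: for every M-convex $\nu:J\to\R$ (rational values suffice, then take closures), the polynomial $f_t=\sum_{\alpha\in J} t^{\nu(\alpha)} x^\alpha/\alpha!$ is Lorentzian for $t$ sufficiently small (resp.\ large, depending on the convention). I would cite the theorem of Brändén--Huh stating precisely that the tropicalization of the Lorentzian space equals the space of M-concave functions. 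I will then check that the shift by $\log\alpha!$ is harmless. Because $\alpha\mapsto\log\alpha!$ is separable convex on M-convex sets, I must verify that the linear translation by the vector $(\log \alpha!)_\alpha$ disappears after tropicalization. It does, since it is a bounded rescaling of the coordinates, so multiplying coordinates by fixed positive constants leaves $\trop$ unchanged.

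The main step, and where I expect care is needed, is matching signs and conventions. I must check whether $\trop(X)$ in Section~\ref{sec:thm} is defined via $\lim \log_t x$ or $-\mathrm{val}$, and whether $\BR$ inequalities $x^\lambda\le c$ correspond to $\langle\lambda,w\rangle\le0$ for $w\in\trop(X)$. That is, I need $\trop(X)^\vee$ in the paper's convention to mean nonnegativity of the right signed pairing. After aligning conventions so that $\trop(\upL_J)$ is exactly the set of M-convex functions (not M-concave), the equality $\BR(\upL_J)=\mathrm{M}(J)^\vee$ follows immediately from \Cref{thm:main}.

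One further subtlety is that $\upL_J$ should consist of Lorentzian polynomials with support exactly $J$, so that all coefficients are positive and $\upL_J\subset\R^J_{>0}$. I must also check that $\mathrm{M}(J)$ is closed, which holds because M-convexity is defined by finitely many closed inequalities (the exchange conditions, or the local characterization). With that, the tropicalization, which is closed, coincides with it.
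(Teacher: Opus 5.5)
Your proposal is correct and matches the paper's proof: apply \Cref{thm:main} and then cite \cite{lorentzian}*{Theorem 3.20}, which gives $\trop(\upL_J)=\mathrm{M}(J)$ in the same $-\log_q$/valuation convention used here. The paper just cites that theorem, so your extra checks (sign conventions, the harmless $\alpha!$ rescaling, closedness of $\mathrm{M}(J)$) are already covered by the citation.
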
}

In \Cref{sec:counter} we provide a counterexample to a conjecture in \cite{HHSW25} about bounded ratios on the set of Lorentzian polynomials of degree $2$ with full support.
\section{The set of bounded ratios is the dual of the tropicalization}\label{sec:thm}
Let $X\subset\R^n_{>0}$ be a semialgebraic set. Its \emph{tropicalization} is defined as \cite{alessandrini}
\begin{equation*}
    \trop(X):=\lim_{q\to\infty}\underbrace{\{(-\log_q a_1,\ldots,-\log_q a_n)\mid (a_1,\ldots,a_n)\in X\}}_{=:\cA_q(X)}
\end{equation*}
where the limit on the right-hand side is taken with respect to the Hausdorff metric. One inclusion in Theorem \ref{thm:main} follows immediately from this definition.
\begin{lem}
    For all $\lambda\in\BR(X)$ and $x\in\trop(X)$ we have $\langle\lambda,x\rangle\geq0$, {i.e., $\BR(X) \subset \trop(X)^\vee$}.
\end{lem}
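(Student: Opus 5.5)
Fix $\lambda\in\BR(X)$ and choose $c>0$ with $x^\lambda\le c$ for all $x\in X$. The plan is to turn this multiplicative bound into a linear inequality on the sets $\cA_q(X)$, and then pass to the Hausdorff limit.

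First, for $q>1$ and $a=(a_1,\ldots,a_n)\in X$, write $y=(-\log_q a_1,\ldots,-\log_q a_n)\in\cA_q(X)$, so that $a_i=q^{-y_i}$. Then
\[
a^\lambda=\prod_i q^{-\lambda_i y_i}=q^{-\langle\lambda,y\rangle}.
\]
Taking $\log_q$ of the inequality $a^\lambda\le c$ gives
\[
\langle\lambda,y\rangle\ \ge\ -\log_q c=-\frac{\log c}{\log q}
\]
for every $y\in\cA_q(X)$.

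Next, let $x\in\trop(X)$. By the definition via the Hausdorff limit, the distance from $x$ to $\cA_q(X)$ tends to $0$ as $q\to\infty$. This holds because the Hausdorff distance between $\cA_q(X)$ and $\trop(X)$ tends to $0$, and every point of the limit is within that distance of $\cA_q(X)$. So we can choose $q_k\to\infty$ and $y_k\in\cA_{q_k}(X)$ with $y_k\to x$. The inequality above gives $\langle\lambda,y_k\rangle\ge -\log c/\log q_k$, and the right-hand side tends to $0$. Letting $k\to\infty$ and using continuity of $\langle\lambda,\cdot\rangle$ yields $\langle\lambda,x\rangle\ge0$.

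The only delicate point is this Hausdorff-limit step. If the sets $\cA_q(X)$ are unbounded, the limit should be read in the appropriate sense, for example Hausdorff convergence on compact subsets, or after intersecting with balls. Either reading still yields points $y_k\in\cA_{q_k}(X)$ converging to a given $x\in\trop(X)$, and that is all the argument uses.
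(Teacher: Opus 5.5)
Your proof is correct and follows the paper's argument: you take $\log_q$ of $a^\lambda\le c$ to get $\langle\lambda,y\rangle\ge-\log_q c$ on $\cA_q(X)$, then let $q\to\infty$. Your explicit Hausdorff-limit step, extracting $y_k\in\cA_{q_k}(X)$ with $y_k\to x$, just spells out what the paper leaves implicit.
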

\begin{proof}
    Let $\lambda\in\BR(X)$. By definition, there exists $c\geq1$ such that $a^\lambda\leq c$ for all $a\in X$. 
    Taking the logarithm on both sides gives for all $q\geq1$:
    \begin{equation*}
        \lambda_1\cdot\log_qa_1+\cdots+\lambda_n\cdot\log_qa_n\leq\log_q(c).
    \end{equation*}
    This shows that $\langle\lambda,x\rangle\geq-\log_q(c)$ for all $x\in\cA_q(X)$. Since $\lim_{q\to\infty}\log_q(c)=0$, this implies the claim.
\end{proof}
For the other direction, we use the following definition of $\trop(X)$ which is equivalent to the above \cite{alessandrini}. Let $R$ be the field of real algebraic Puiseux series over $\R$. Recall that this field is isomorphic to the field of germs at $0$ of continuous semialgebraic functions \cite{BasuPollackRoy}*{Theorem~3.14}. The ordering $\le$ on $R$ is given as follows: for $a = [f], b = [g] \in R$  germs of continuous semialgebraic functions, write $a \le b$ if and only if there exists $\epsilon \in \R_{>0}$ such that $f(c) \le g(c)$ for all $c\in (0, \epsilon)$. Equivalently, a non-zero Puiseux series is positive if the sign of its leading coefficient is positive. The field $R$ has a non-trivial valuation $v$ which sends a non-zero Puiseux series to its smallest exponent. This valuation is \emph{convex} in the sense that $v(a) \ge v(b)$ if $a,b\in R$ and $ 0 \le a \le b$.
Then the tropicalization of $X \subset \R^n_{>0}$ is equal to the Euclidean closure of
\[
    \{(v(a_1),\ldots,v(a_n))\in\R^n\mid a\in X_R\}
\]
where $X_R$ is {the extension of $X$ to $R$, i.e.} the set of all points in $R_{>0}^n$ which satisfy the defining inequalities of $X$ \cite{alessandrini}*{Corollary~4.6}.
\begin{proof}[Proof of Theorem \ref{thm:main}]
    Let $K {= \trop(X)^\vee}$. We have to show that $K\subset\BR(X)$.
    By \cite{baldiNoteOminimalTropicalizations2026}*{Proposition 3.4.4}, see also \cites{allamigeon,dries}, the tropicalization $\trop(X)$ is a finite union of rational polyhedra. Thus $K$ is a rational polyhedral convex cone and it suffices to show that every $\lambda\in K\cap\Q^n$ is in $\BR(X)$. Assume for the sake of a contradiction that $\lambda \notin \mathrm{BR}(X)$. By the curve selection lemma \cite{BasuPollackRoy}*{Theorem~3.19}
    there exists a semialgebraic curve $\gamma$ inside $X$ such that $\gamma(t)^\lambda\to\infty$ as $t\to0$. Let $\alpha$ be the point in ${X_R} \subset R^n_{>0}$ associated to $\gamma$. Since $\gamma(t)^\lambda\to\infty$, we have \[0>v(\alpha^\lambda)=\lambda_1v(\alpha_1)+ \cdots +\lambda_n v(\alpha_n).\] This is a contradiction to $\lambda\in K$.
\end{proof}
\begin{rem}
    Except for the rationality statement, one can generalize Theorem~\ref{thm:main} from semialgebraic sets to definable sets in a polynomially bounded o-minimal structure, using the results from \cite{baldiNoteOminimalTropicalizations2026}.
\end{rem}
\begin{proof}[Proof of Corollary \ref{cor:lorentzian}]
    This follows immediately from Theorem \ref{thm:main} and \cite{lorentzian}*{Theorem 3.20} which says that $\trop(\upL_J)=\mathrm{M}(J)$.
\end{proof}
\section{A counterexample to the bounded ratio conjecture}\label{sec:counter}
A \emph{Lorentzian matrix} is a real symmetric matrix with nonnegative entries and at most one positive eigenvalue.  We denote by $\Lo_n^+$ the set of $n\times n$ Lorentzian matrices with all entries positive.  Huang, Huh, Soskin, and Wang study studied $\BR(\Lo_n^+)$ in \cite{HHSW25}.
By conjugating with diagonal matrices with positive entries, which preserves being Lorentzian, the diagonal entries of a matrix in $\Lo_n^+$ can be made equal to one. We call such a matrix \emph{normalized}.  A \emph{reduced exponent vector} is a vector
\[
        \alpha=(\alpha_{ij})_{1\leq i<j\leq n}\in\R^{\binom n2}.
\]
It is called a \emph{reduced bounded ratio} on $\Lo_n^+$ if there is a constant $c>0$ such that
\[
        P^\alpha:=\prod_{1\leq i<j\leq n}p_{ij}^{\alpha_{ij}}\leq c
\]
for all normalized matrices $P=(p_{ij})\in\Lo_n^+$.  The infimum of all such constants is denoted by $f(\alpha)$.  Following \cite{HHSW25}*{Definition 1.8}, a nonzero reduced bounded ratio is called \emph{normalized} if
\[
        \sum_{1\leq i<j\leq n}\alpha_{ij}=-1.
\]
We will disprove the following conjecture.

\begin{con}[\cite{HHSW25}*{Conjecture 1.9}]\label{con:br}
For every $n$ and every normalized reduced ratio $\alpha$ on on $\Lo_n^+$ one has $f(\alpha)\leq2$.
\end{con}

We will use the description of bounded ratios in terms of the cut cone.  For $S\subseteq[n]$, let $\delta(S)\in\R^{\binom n2}$ be the cut vector
\[
        \delta(S)_{ij}=\begin{cases}
        1,&\text{if }S\text{ contains exactly one of }i,j,\\
        0,&\text{otherwise.}
        \end{cases}
\]
The \emph{cut cone} $\Cut_n$ is the cone generated by the vectors $\delta(S)$. The set $\underline{\operatorname{BR}}(\Lo_n^+)$ of reduced bounded ratios, defined as $\BR(\Lo_n^+ \cap \{ \, P =(p_{ij}) \colon p_{ii}=1 \text{ for }i=1, \dots , n \, \})$ before, is equal to minus the dual convex cone of $\Cut_n$ by \cite{HHSW25}*{Theorem B}:
\[
        \underline{\operatorname{BR}}(\Lo_n^+)
        = - \Cut_n^\vee.
\]
Since $f$ is log-convex, Conjecture \ref{con:br} can be reduced to extreme rays of $\underline{\operatorname{BR}}(\Lo_n^+)$, which correspond to facets of $\Cut_n$; see the discussion following \cite{HHSW25}*{Conjecture 1.9}. 

Conjecture \ref{con:br} is true for $n\leq5$ by \cite{HHSW25}*{Theorem A}. According to \cite{DL10}*{Remark 15.2.11} there are, up to symmetry and in addition to the inequalities on $5\times5$ submatrices, two new inequalities needed to describe $\Cut_6$. These are of a certain type that is called \emph{hypermetric} and we were not able to produce a counter-example for them. A complete list of facets of $\Cut_7$ is given in \cite{DL10}*{Section 30.6}. We use the first inequality from this list which is not hypermetric:
\[
        \mathrm{CW}^1_7(1,1,1,1,1,-1,-1)^Tx\leq0.
\]
By \cite{DL10}*{Definition 29.1.2} this inequality is given by
\begin{equation}\label{eq:facet}
z:=        x_{13}+x_{14}+x_{24}+x_{25}+x_{35}+x_{67}
        -\sum_{i=1}^5(x_{i6}+x_{i7})\leq0.
\end{equation}
Since the coordinate sum of $z$ is $-4$, the vector $\alpha:=\frac{1}{4}z$ is normalized. 

Now consider the three-parameter family
\begin{equation*}\label{eq:Prqt}
P(r,q,t)=
\begin{pmatrix}
1&r&q&q&r&t&t\\
r&1&r&q&q&t&t\\
q&r&1&r&q&t&t\\
q&q&r&1&r&t&t\\
r&q&q&r&1&t&t\\
t&t&t&t&t&1&1\\
t&t&t&t&t&1&1
\end{pmatrix}.
\end{equation*}
This shape is adapted to the support of \eqref{eq:facet}. Namely, the five positive summands
\[
        x_{13},\ x_{14},\ x_{24},\ x_{25},\ x_{35}
\]
are equal to $q$, the summand $x_{67}$ is equal to $1$, and all ten negative summands $x_{i6},x_{i7}$, $1\leq i\leq5$, are equal to $t$.  Therefore
\begin{equation}\label{eq:ratio-simplifies}
        P(r,q,t)^\alpha
        =
        \left(\frac{q^5}{t^{10}}\right)^{1/4}.
\end{equation}

\begin{lem}\label{lem:lorentzian-condition}
Assume $r,q,t>0$.  The matrix $P(r,q,t)$ is Lorentzian if and only if
\begin{equation}\label{eq:lorentzian-conditions}
        r+q-2\geq \sqrt5\,|r-q|
        \qquad\text{and}\qquad
        t^2\geq\frac{1+2r+2q}{5}.
\end{equation}
\end{lem}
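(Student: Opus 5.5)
The plan is to diagonalize $P=P(r,q,t)$ explicitly, using its symmetry under the cyclic group $\Z/5\Z$ acting on the first five coordinates. Since all entries of $P$ are positive, $P$ is Lorentzian exactly when it has at most one positive eigenvalue. So it suffices to compute the spectrum and check when this holds.

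\emph{Step 1: the upper-left block.} The upper-left $5\times5$ block $C$ is the circulant matrix with first row $(1,r,q,q,r)$: entries at cyclic distance $1$ equal $r$, and entries at cyclic distance $2$ equal $q$. Its eigenvectors are the Fourier vectors $v_k=(\zeta^{kj})_{j=0}^4$ with $\zeta=e^{2\pi i/5}$, with eigenvalues
\[
\mu_k=1+2r\cos(2\pi k/5)+2q\cos(4\pi k/5).
\]
Using $\cos(2\pi/5)=\frac{\sqrt5-1}{4}$ and $\cos(4\pi/5)=-\frac{\sqrt5+1}{4}$, this gives
\[
\mu_0=1+2r+2q,\qquad
\mu_{1}=\mu_4=\tfrac12\bigl(2-(r+q)+\sqrt5(r-q)\bigr),\qquad
\mu_{2}=\mu_3=\tfrac12\bigl(2-(r+q)-\sqrt5(r-q)\bigr).
\]
For $k\ne0$, the vector $v_k$ is orthogonal to the all-ones vector. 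The last two columns of $P$ restricted to the first five rows are constant equal to $t$, so extending $v_k$ (or its real and imaginary parts) by zeros gives an eigenvector of $P$ with eigenvalue $\mu_k$. This accounts for four eigenvalues of $P$.

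\emph{Step 2: the remaining eigenvalues.} The vector $(0,0,0,0,0,1,-1)$ is an eigenvector of $P$ with eigenvalue $0$. The remaining invariant subspace is spanned by $u=(1,1,1,1,1,0,0)$ and $w=(0,0,0,0,0,1,1)$. Writing $s=1+2r+2q$, one has $Pu=su+5tw$ and $Pw=2tu+2w$, so $P$ acts on this subspace by the matrix
\[
\begin{pmatrix}s&2t\\5t&2\end{pmatrix},
\]
which has trace $s+2>0$ and determinant $2s-10t^2$. Because the trace is positive, this block always contributes at least one positive eigenvalue. It contributes exactly one precisely when the determinant is $\le 0$, i.e.\ when $t^2\ge s/5=\frac{1+2r+2q}{5}$. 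This is the second condition in \eqref{eq:lorentzian-conditions}.

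\emph{Step 3: combining.} Since the $u,w$ block already supplies one positive eigenvalue, $P$ is Lorentzian if and only if that block supplies exactly one, and $\mu_1\le 0$ and $\mu_2\le0$. The two inequalities $\mu_1\le 0$ and $\mu_2\le 0$ read
\[
2-(r+q)\pm\sqrt5(r-q)\le0,
\]
which together are equivalent to $r+q-2\ge\sqrt5|r-q|$. This is the first condition in \eqref{eq:lorentzian-conditions}.

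The only place needing care is Step 1: pinning down the two cosine values and checking that the circulant pattern of $P$ really is $(1,r,q,q,r)$, so that $\mu_1$ and $\mu_2$ come out with the signs stated. Everything else is a routine check of the invariant-subspace decomposition.
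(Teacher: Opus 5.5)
Your proof is correct and follows essentially the same route as the paper. You use the same splitting of $\R^7$ into the kernel direction $(0,\dots,0,1,-1)$, the span of $(1,\dots,1,0,0)$ and $(0,\dots,0,1,1)$, and the zero-sum vectors supported on the first five coordinates, together with the same circulant eigenvalues of the upper-left block. The only cosmetic difference is that you use the (non-symmetric) matrix $\begin{pmatrix}s&2t\\5t&2\end{pmatrix}$ of $P$ acting on the two-dimensional invariant subspace, whereas the paper uses the Gram matrix $\begin{pmatrix}4&10t\\10t&5+10q+10r\end{pmatrix}$ of the bilinear form there; both give the same determinant condition $t^2\ge(1+2r+2q)/5$.
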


\begin{proof}
We consider the decomposition $\R^7=U\oplus V\oplus W$ into the linear subspaces
\begin{align*}
    U&=\Span\{(0,\ldots,0,1,-1 )^T\},\\
    V&=\Span\{(0,\ldots,0,1,1 )^T,(1,\ldots,1,0,0 )^T\},\\
    W&=\Span\{(v_1,\ldots,v_5,0,0 )^T\mid\sum_{i=1}^5v_i=0\}.
\end{align*}
We note that these are pairwise orthogonal with respect to the bilinear form $P$, and that $U$ is in the kernel of $P$. The restriction of $P$ to $V$ is
\[
        \begin{pmatrix}
        4&10t\\
        10t&5+10q+10r
        \end{pmatrix}.
\]
This matrix has at least one positive eigenvalue and its determinant is non-positive if and only if the second inequality in (\ref{eq:lorentzian-conditions}) holds. It remains to show that the restriction of $P$ to $W$ is negative semidefinite if and only if the first inequality in (\ref{eq:lorentzian-conditions}) holds. We calculate that the eigenvalues of the upper-left $5\times5$ block of $P$ are:
\begin{align*}
    \lambda&=1+2r+2q,\\
    \mu_\pm&=\frac {1} {2}\left(\pm\sqrt {5}|r-q| - q - r + 
   2 \right).
\end{align*}
This can be done brute-force or by making use of the fact that it is a \emph{circulant matrix}. For such matrices there exist formulas to calculate their eigenvalues \cite{circulant}. We further note that $\lambda$ is the eigenvalue of the all-ones vector and the eigenvalues $\mu_\pm$ have multiplicity $2$. Hence $P$ is negative semidefinite on $W$ if and only if $\mu_\pm\leq0$. This is equivalent to the first inequality in (\ref{eq:lorentzian-conditions}).
\end{proof}

The conditions in Lemma \ref{lem:lorentzian-condition}, together with the requirement that the value in \eqref{eq:ratio-simplifies} is larger than $2$, are
\[
        r+q-2\geq \sqrt5|r-q|,
        \qquad
        5t^2\geq1+2r+2q,
        \qquad
        q^5>16t^{10}.
\]
These inequalities are all satisfied for
\[
        r=284,
        \qquad
        q=701,
        \qquad
        t=20.
\]
Evaluating the normalized bounded ratio (\ref{eq:ratio-simplifies}) gives
\[
        \left(\frac{q^5}{t^{10}}\right)^{1/4}
        =
        \left(\frac{701}{400}\right)^{5/4}
        \approx 2.01638>2
\]
and hence Conjecture~\ref{con:br} is disproved.

\section*{Acknowledgments}
L.~Baldi thanks R. Sinn and J. Weigert for introducing to him the concept of bounded ratios, and for helpful discussions. L.~Baldi was partially funded by the Deutsche Forschungsgemeinschaft (DFG, German Research Foundation) – Project number 582983756.
\bibliographystyle{plain}
\bibliography{biblio.bib}

\end{document}